\documentclass[12pt]{article}

\usepackage{amsmath}
\usepackage{amsthm}
\usepackage{amssymb}
\usepackage{hyperref}

\newtheorem{theorem}{Theorem}[section]

\newtheorem{definition}[theorem]{Definition}
\newtheorem{proposition}[theorem]{Proposition}
\newtheorem{corollary}[theorem]{Corollary}
\newtheorem{lemma}[theorem]{Lemma}
\newtheorem{remark}[theorem]{Remark}

\newcommand{\bdfn}{\begin{definition}}
\newcommand{\edfn}{\end{definition}}
\newcommand{\bthm}{\begin{theorem}}
\newcommand{\ethm}{\end{theorem}}
\newcommand{\bprop}{\begin{proposition}}
\newcommand{\eprop}{\end{proposition}}
\newcommand{\bcor}{\begin{corollary}}
\newcommand{\ecor}{\end{corollary}}
\newcommand{\blem}{\begin{lemma}}
\newcommand{\elem}{\end{lemma}}

\newcommand{\be}{\begin{enumerate}}
\newcommand{\ee}{\end{enumerate}}
\newcommand{\bt}{\begin{tabular}}
\newcommand{\et}{\end{tabular}}
\newcommand{\beq}{\begin{equation}}
\newcommand{\eeq}{\end{equation}}
\newcommand{\ba}{\begin{array}}
\newcommand{\ea}{\end{array}}
\newcommand {\bea} {\begin{eqnarray}}
\newcommand {\eea} {\end {eqnarray}}
\newcommand {\bua} {\begin{eqnarray*}}
\newcommand {\eua} {\end {eqnarray*}}

\providecommand{\defnterm}[1]{{\em #1}}
\newcommand{\Ra}{\Rightarrow}
\newcommand{\se}{\subseteq}
\newcommand{\ol}{\overline}
\newcommand{\ds}{\displaystyle}

\def\R{{\mathbb R}}
\def\N{{\mathbb N}}
\def\Z{{\mathbb Z}}
\newcommand{\cO}{\mathcal{O}}

\newcommand{\eps}{\varepsilon}

\newcommand{\limn}{\ds\lim_{n\to\infty}}
\newcommand{\Dlimn}{\ds\Delta-\lim_{n\to\infty}}
\newcommand{\lsupn}{\ds \limsup_{n\to\infty}}

\newcommand{\lambdaxy}{(1-\lambda)x\oplus\lambda y}
\newcommand{\lambdaxz}{(1-\lambda)x\oplus\lambda z}

\newcommand{\muyw}{(1-\mu)y\oplus\mu w}
\newcommand{\alphaxw}{(1-\alpha)x\oplus\alpha w}
\newcommand{\tlambdaxy}{(1-\tilde{\lambda})x\oplus\tilde{\lambda} y}

%%%%%%%%%%DAVID%%%%%%%%%%%%
 	
\usepackage[text={6.5in,8in},centering]{geometry}
\newcommand{\conv}[1]{\xrightarrow{\;#1\;}}

\newcommand{\comp}{{\circ}}
\DeclareMathOperator*{\argmin}{argmin}

\begin{document}

\title{Firmly nonexpansive mappings in classes of geodesic spaces}
\author{David Ariza-Ruiz$^{1}$, Lauren\c tiu Leu\c stean$^{2}$, Genaro L\'{o}pez-Acedo$^{1}$\\[0.2cm]
\footnotesize ${}^1$ Dept. An\'alisis Matem\'atico, Fac. Matem\'aticas,\\
\footnotesize Universidad de Sevilla, Apdo. 1160, 41080-Sevilla, Spain\\[0.1cm]
\footnotesize${}^2$ Simion Stoilow Institute of Mathematics of the Romanian Academy, \\
\footnotesize P. O. Box 1-764, RO-014700 Bucharest, Romania\\[0.1cm]
\footnotesize E-mails: dariza@us.es, Laurentiu.Leustean@imar.ro, glopez@us.es
}

\date{}
\maketitle

\begin{abstract}
Firmly nonexpansive mappings play an important role in metric fixed point theory and optimization due to their
correspondence with maximal monotone operators. In this paper we do a thorough study of fixed
point theory and the  asymptotic behaviour of Picard iterates of these mappings in different classes of geodesic spaces, such as
(uniformly convex) $W$-hyperbolic spaces, Busemann spaces and CAT(0) spaces. Furthermore, we apply methods of
proof mining to obtain effective rates of asymptotic regularity for the Picard iterations.  \\[0.1cm]

\noindent {\em MSC: } Primary: 47H09, 47H10, 53C22; Secondary: 03F10, 47H05, 90C25, 52A41.\\

\noindent {\em Keywords}: firmly nonexpansive mappings, geodesic spaces, uniform convexity, Picard iterates, asymptotic regularity,
$\Delta$-convergence, proof mining, effective bounds, minimization problems.
\end{abstract}

\section{Introduction}

Let $C$ be a closed convex subset of a Hilbert space $H$. {\em Firmly contractive} mappings were defined by
Browder \cite{Bro67} as mappings $T:C\to H$ satisfying the following inequality for all $x,y\in C$:
\begin{equation}\label{int:eq:1}
    \|Tx-Ty\|^2\leq \langle x-y,Tx-Ty\rangle.
\end{equation}
As Browder points out, these mappings play an important role in the study of (weak) convergence for sequences of
nonlinear operators. An example of a firmly contractive mapping is the metric projection $P_C:H\to H$, defined by
$ P_C(x)=\argmin_{y\in C}\{\Vert x-y\Vert\}$.
One can easily see that any firmly contractive mapping $T$ is nonexpansive, i.e. satisfies
$\|Tx-Ty\|\leq \|x-y\|$ for all $x,y\in C$.  The converse is not true, as one can see by taking $T=-Id$.
\par\medskip
In his study  of  nonexpansive projections on subsets of Banach spaces, Bruck \cite{Bru73} defined a
{\em firmly nonexpansive} mapping  $T:C\to E$, where $C$ is a closed convex subset of a real
Banach space $E$, to be a mapping with the property that for all $x,y\in C$ and $t\geq 0$,
\begin{equation}\label{int:eq:2}
    \|Tx-Ty\|\leq\|(1-t)(Tx-Ty)+t(x-y)\|.
\end{equation}
In Hilbert spaces these mappings coincide with the firmly contractive ones introduced by Browder.
As Bruck shows, to any nonexpansive selfmapping $T:C\to C$ that has fixed points, one can associate
a 'large' family of firmly nonexpansive mappings having the same fixed point set with $T$.
Hence, from the point of view  of the existence of fixed points on convex closed sets, firmly nonexpansive
mappings  exhibit a similar behaviour with the nonexpansive ones. However, this is not anymore true if we
consider non-convex domains \cite{Sma91}. Firmly nonexpansive mappings in Banach spaces have also
been studied in \cite{BruRei77} and \cite{Rei77}.
\par\medskip
If $T$ is firmly nonexpansive and has fixed points, it is well known \cite{Bro67} that
the Picard iterate $(T^nx)$ converges weakly to a fixed point of $T$ for any starting point $x$, while this is
not true for nonexpansive mappings (take again $T=-Id$). This is a first reason for the importance of
firmly nonexpansive mappings.
\par\medskip
A second reason for the importance of this class of mappings  is their correspondence with maximal
monotone operators, due to Minty \cite{Min62}.

The resolvent of a monotone operator was introduced by Minty \cite{Min62} in Hilbert spaces and by
Br\'{e}zis, Crandall and Pazy \cite{BreCraPaz70} in Banach spaces. Among other applications, the resolvent
has proved to be very useful in the study of the asymptotic behaviour of the solutions of the Cauchy abstract problem governed
by a monotone operator, see for instance \cite{GarRei06,NevRei79,Xu01}. Given a maximal monotone
operator $A:H\to 2^H$ and $\mu>0$, its associated {\em resolvent}
of order $\mu$, defined by $J_\mu^A:=(Id+\mu A)^{-1}$, is a firmly nonexpansive mapping from
$H$ to $H$ and the set of fixed points of $J_\mu^A $ coincides with the set of zeros of $A$. We refer to \cite{BauMofWan12} for a
very nice presentation of this correspondence. Rockafellar's \cite{Roc76} proximal point algorithm uses
the resolvent to approximate the zeros of maximal monotone operators.
\par\medskip
The subdifferential of a proper, convex and lower semicontinuous function $F:H\to(-\infty,\infty]$ is a
maximal monotone operator, hence  the resolvent associated to the subdifferential is a
firmly nonexpansive mapping, that coincides with the proximal map introduced by Moreau \cite{Mor65}.
The proximal point algorithm for approximating the minimizers of $F$ is based on the weak convergence towards a
fixed point of the Picard iterate of the resolvent and the fact that the minimizers of $F$ are the fixed points
of the resolvent.
\par\medskip
In the last 20 years a fruitful direction of research consists of extending techniques and results obtained in
normed spaces to metric spaces without linear structure. For instance  minimization problems associated to
convex functionals have been solved in the setting of Riemannian manifolds \cite{FerPerNem05,LiLopNar09}, while
some problems have been modelled as abstract Cauchy equations  in the framework  of nonpositive curvature
geodesic metric spaces (see \cite{May98,Sto11} and references therein).
Although apparently the framework and the conceptual approach in the previous problems are
quite different, it is possible, as in the case of normed spaces, to find a bridge between them through
firmly nonexpansive mappings.
\par\medskip
{\em The goals of our work are twofold. First we generalize known results on firmly nonexpansive mappings
in Hilbert or Banach spaces to suitable classes of geodesic spaces. Second we obtain effective results on
the asymptotic behaviour of Picard iterations.}
\par\medskip
In Section \ref{cap:geodesic} we give basic definitions and properties of the classes of geodesic spaces we
consider in this paper: $W$-hyperbolic spaces, $UCW$-hyperbolic spaces, Busemann spaces and CAT(0) spaces.
We recall properties of asymptotic centers in such spaces, that are essential for our results.

Firmly nonexpansive mappings in the Hilbert ball and, more generally, in hyperbolic spaces, have already been studied in
\cite{GoeRei84,ReiSha87,ReiSha90} and, more recently, in the paper by Kopeck\'{a} and Reich \cite{KopRei09}.
In Section \ref{cap:firmly-ne} we extend Bruck's definition of firmly nonexpansive mapping to our class of $W$-hyperbolic spaces.
We show that, in the setting of CAT(0) spaces, the metric projection on a closed
convex set and the resolvent of a proper, convex and lower semicontinuous mapping are firmly nonexpansive.
Furthermore, Bruck's association of a family of firmly nonexpansive mappings to any
nonexpansive mapping is adapted to Busemann spaces.

Section \ref{cap:fpt-theorem} contains a fixed point theorem for firmly nonexpansive mappings defined on
finite unions of closed convex subsets of a complete $UCW$-hyperbolic space. Our result generalizes and
strengthens Smarzewski's \cite{Sma91} fixed point theorem for uniformly convex Banach spaces.
In this section we also obtain new results about periodic points of (firmly) nonexpansive mappings.

In the next section we study the asymptotic behaviour of Picard iterates of firmly nonexpansive mappings,
extending to $W$-hyperbolic spaces results of Reich and Shafrir \cite{ReiSha87,ReiSha90}.
As a consequence, we get that any firmly nonexpansive mapping with bounded orbits is asymptotically
regular.

A concept of weak convergence in geodesic spaces is the so-called $\Delta$-convergence, defined by Lim
\cite{Lim76}. Applying our asymptotic regularity result and general properties of Fej\' er monotone sequences,
we prove in Section \ref{cap:Picard-delta}, in the setting of complete $UCW$-hyperbolic spaces,
the $\Delta$-convergence of Picard iterates of a firmly nonexpansive mapping to a fixed point. As a consequence,
one gets the $\Delta$-convergence of a proximal point like algorithm to a minimizer of a proper, convex and
lower semicontinuous mapping defined on a CAT(0) space.

In the final section of the paper we obtain effective rates of asymptotic regularity for Picard iterations,
applying methods of proof mining, similar to the ones used for  Krasnoselski-Mann iterations of nonexpansive
mappings by Kohlenbach \cite{Koh03} in Banach spaces  and the second author \cite{Leu07} in $UCW$-hyperbolic
spaces. We point out that our results are new even for uniformly convex Banach spaces. In the case of
CAT(0) spaces we obtain a
quadratic rate of asymptotic regularity. {\em Proof mining} is a paradigm of research concerned with the
extraction, using tools from mathematical
logic, of hidden finitary and combinatorial content, such as algorithms and effective bounds, from proofs that
make use of highly infinitary principles. We refer to Kohlenbach's book \cite{Koh08-book} for details.

\section{Classes of geodesic spaces - definitions and properties}\label{cap:geodesic}

A {\em $W$-hyperbolic space}  $(X,d,W)$ is a metric space $(X,d)$ together with a convexity mapping $W:X\times X\times [0,1]\to X$ satisfying
\begin{eqnarray*}
(W1) & d(z,W(x,y,\lambda))\le (1-\lambda)d(z,x)+\lambda d(z,y),\\
(W2) & d(W(x,y,\lambda),W(x,y,\tilde{\lambda}))=|\lambda-\tilde{\lambda}|\cdot
d(x,y),\\
(W3) & W(x,y,\lambda)=W(y,x,1-\lambda),\\
(W4) & \,\,\,d(W(x,z,\lambda),W(y,w,\lambda)) \le (1-\lambda)d(x,y)+\lambda
d(z,w).
\end {eqnarray*}
The convexity mapping $W$ was first considered by Takahashi in \cite{Tak70}, where a triple
$(X,d,W)$ satisfying $(W1)$ is called a \defnterm{convex metric space}.
$W$-hyperbolic spaces  were introduced by Kohlenbach \cite{Koh05} and we refer to
\cite[p.384]{Koh08-book} for a comparison between them and other notions of 'hyperbolic space' that
can be found in the literature (see for example \cite{Kir82,GoeKir83,ReiSha90}).
The class of $W$-hyperbolic spaces includes (convex subsets of) normed spaces, the Hilbert ball
(see \cite{GoeRei84} for a book treatment) as well as $CAT(0)$ spaces \cite{BriHae99}.

We shall denote a $W$-hyperbolic space simply by $X$, when the metric $d$ and the mapping $W$ are
clear from the context. One can easily see that
\beq
d(x,W(x,y,\lambda))=\lambda d(x,y)\quad \text{~and~}\quad  d(y,W(x,y,\lambda))=(1-\lambda)d(x,y). \label{prop-xylambda}
\eeq
Furthermore, $W(x,y,0)=x,\,W(x,y,1)=y$ and $W(x,x,\lambda)=x$.

Let us recall now some notions concerning geodesics. Let $(X,d)$ be a metric space.
A \defnterm{geodesic path in} $X$ (\defnterm{geodesic in } $X$ for short) is a map $\gamma:[a,b]\to X$ satisfying
\beq
d(\gamma(s),\gamma(t))=|s-t| \text{~~for all~~} s,t\in [a,b].
\eeq
A \defnterm{geodesic segment} in $X$ is the image of a geodesic in $X$.
If $\gamma:[a,b]\to X$ is a geodesic in $X$, $\gamma(a)=x$ and $\gamma(b)=y$, we say that the
geodesic  $\gamma$ \defnterm{joins x and y} or that the geodesic  segment $\gamma([a,b])$
\defnterm{joins x and y}; $x$ and $y$ are also called the \defnterm{endpoints} of $\gamma$.

A metric space $(X,d)$ is said to be a \defnterm{(uniquely) geodesic space} if every two distinct
points are joined by a (unique) geodesic segment.

If $\gamma([a,b])$ is a geodesic segment joining $x$ and $y$ and $\lambda\in [0,1]$,
$z:=\gamma((1-\lambda)a+\lambda b)$ is the unique point in $\gamma([a,b])$ satisfying
\beq
d(z,x)=\lambda d(x,y)\quad \text{~and~}\quad d(z,y)=(1-\lambda)d(x,y).
\eeq
In the sequel, we shall use the notation $[x,y]$ for the geodesic segment $\gamma([a,b])$ and we shall denote this $z$  by
 $\lambdaxy$, provided that there is no possible ambiguity.

Given three points $x,y,z$ in a metric space $(X,d)$, we say that $y$ \defnterm{lies between} $x$ and
$z$ if these points are pairwise distinct and if we have $d(x,z)=d(x,y)+d(y,z)$.  Obviously,
if $y$ lies between $x$ and $z$, then $y$ also lies between $z$ and $x$. Furthermore, the relation  of betweenness satisfies also a transitivity property (see, e.g., \cite[Proposition 2.2.13]{Pap05}):

\bprop\label{trans-between}
Let $X$ be a metric space and $x,y,z,w$ be pairwise distinct points of $X$. The following statements are equivalent:
\be
\item $y$ lies between $x$ and $z$ and $z$ lies between $x$ and $w$.
\item $y$ lies between $x$ and $w$ and $z$ lies between $y$ and $w$.
\ee
\eprop

The following \defnterm{betweenness} property expresses another form of 'transitivity', which is not true
in general metric spaces: \beq
\bt{ll} for all $x,y,z,w\in X$, & if $y$ lies between $x$ and $z$ and $z$ lies between $y$ and $w$,\\
 & then $y$ and $z$ lie both between $x$ and $w$.
\et
 \label{bet-prop}
\eeq

By induction one gets

\blem\label{between-properties}
Let $X$ be a metric space satisfying (\ref{bet-prop}).
For all $n\ge 2$ and all $x_0,x_1,\ldots, x_n\in X$, we have that
\beq
\ba{l}
\text{ if for all }k=1,\ldots,n-1, \, x_k \text{ lies between }x_{k-1} \text{ and } x_{k+1}, \\
\text{ then for all }k=1,\ldots,n-1,
x_k \text{ lies between } x_0 \text{ and } x_{k+1}.
\ea
\label{bet-prop-2}
\eeq
\elem

The next lemma collects some well-known properties of geodesic spaces.
We refer to \cite{Pap05} for details.

\blem\label{geodesic-prop}
Let $(X,d)$ be a geodesic space.
\be
\item\label{geodesic-between} For every pairwise distinct points  $x,y,z$ in $X$,  $y$ lies
between $x$ and $z$ if and only if there exists a geodesic segment $[x,z]$ containing $y$.
\item\label{wz-xy-lies} For every points $x,y,z,w$ and any geodesic segment $[x,y]$,
if $z,w\in [x,y]$, then either $d(x,z)+d(z,w)=d(x,w)$ or $d(w,z)+d(z,y)=d(w,y)$.
\item\label{W2-geodesic} For every geodesic segment $[x,y]$ in $X$ and $\lambda,\tilde{\lambda}\in [0,1]$,
$$d\left(\lambdaxy, \tlambdaxy\right)=|\lambda - \tilde{\lambda}|d(x,y).$$
\item\label{gamma-yx} Let $\gamma:[a,b]\to X$ be a geodesic that joins $x$ and $y$. Define
\[\gamma^-:[a,b]\to X,\quad \gamma^-(s)=\gamma(a+b-s).\]
Then $\gamma^-$ is a geodesic that joins $y$ and $x$ such that $\gamma^-([a,b])=\gamma([a,b])$.
\item\label{uniq-geodesic} Let $\gamma, \eta:[a,b]\to X$ be geodesics. If $\gamma([a,b])=\eta([a,b])$ and $\gamma(a)=\eta(a)$ (or $\gamma(b)=\eta(b)$), then $\gamma=\eta$.
\item\label{uniq-geo-equiv} The following two statements are equivalent:
\be
\item $X$ is uniquely geodesic.
\item For any $x\ne y\in X$ and any $\lambda\in[0,1]$ there exists a unique element $z\in X$ such that
\[d(x,z)=\lambda d(x,y)\quad \text{~and~}\quad  d(y,z)=(1-\lambda)d(x,y).\]
\ee
\ee
\elem

\blem\label{uniq-geodesic-prop}
Let $X$ be a uniquely geodesic space.
\be
\item For all $x,y\in X$, $[x,y]=\{\lambdaxy\mid \lambda \in[0,1]\}$.
\item\label{between-geo-deg} For every pairwise distinct points  $x,y,z$ in $X$, $y$ lies between $x$ and
$z$ if and only if $y\in[x,z]$.
\item\label{useful-firmly-ne} Let $x,y,z,w$ be pairwise distinct points in $X$ such that
$y=\lambdaxz$ and $z=\alphaxw$ for some $\lambda,\alpha\in(0,1)$.
Then $z=\muyw$, where $\ds \mu=\frac{(1-\lambda)\alpha}{1-\alpha\lambda}$.
\ee
\elem
\begin{proof} (i),(ii) are obvious.\\
(iii) Applying (ii), Lemma \ref{geodesic-prop}.(\ref{geodesic-between}) and  Proposition \ref{trans-between},
one gets that $z\in[y,w]$. Thus, $z=\muyw$ for some $\mu\in(0,1)$. Furthermore,
\bua
d(z,y)&=& (1-\lambda)d(x,z)=(1-\lambda)\alpha d(x,w)=\frac{(1-\lambda)\alpha}{1-\alpha}d(z,w)\\
&=&\frac{(1-\lambda)\alpha}{1-\alpha}\cdot(1-\mu)d(y,w)=
\frac{(1-\lambda)\alpha}{1-\alpha}\cdot\frac{1-\mu}{\mu}d(z,y).
\eua
Thus, $\ds \frac{(1-\lambda)\alpha}{1-\alpha}\cdot\frac{1-\mu}{\mu}=1$ and the conclusion follows immediately.
\end{proof}

Let $(X,d,W)$ be a $W$-hyperbolic space. For all $x,y\in X$, let us define
\beq
[x,y]_W:=\{W(x,y,\lambda)\mid \lambda\in[0,1]\}. \label{def-xwW}
\eeq
Then $[x,x]_W=\{x\}$ for all $x\in X$. A subset $C\subseteq X$ is \defnterm{convex} if $[x,y]_W\se C$ for all $x,y\in C$.
Open and closed balls are convex sets. A nice feature of our setting is that any convex subset is itself a $W$-hyperbolic space.

Following \cite{Tak70}, we call a $W$-hyperbolic space \defnterm{strictly convex} if for any $x\ne y\in X$ and any $\lambda\in(0,1)$ there exists a unique element $z\in X$ (namely $z=W(x,y,\lambda)$) such that
\beq
d(x,z)=\lambda d(x,y)\quad \text{~and~}\quad  d(y,z)=(1-\lambda)d(x,y).\label{def-strict-convex-W-hyp}
\eeq

\bprop\label{prop-W-hyp}
Let $(X,d,W)$ be a $W$-hyperbolic space. Then
\be
\item\label{pWh-geodesic} $X$ is a geodesic space and for all $x\ne y\in X$, $[x,y]_W$ is a geodesic
segment joining $x$ and $y$.
\item\label{pWh-uniq-geodesic} $X$ is a uniquely geodesic space if and only if it is strictly convex.
\item If  $X$ is uniquely geodesic, then
\be
\item $W$ is the unique convexity mapping that makes $(X,d,W)$ a $W$-hyperbolic space.
\item For all $x,y\in X$ and $\lambda\in[0,1]$, $W(x,y,\lambda)=\lambdaxy$.
\ee
\ee
\eprop
\begin{proof}
\be
\item For $x\ne y\in X$, the map $W_{xy}:[0,d(x,y)]\to $,
\beq
W_{xy}(\alpha)=W\left(x,y,\frac{\alpha}{d(x,y)}\right).
\eeq\label{W-def-geodesic}
is a geodesic satisfying $W_{xy}([0,d(x,y)])=[x,w]_W$.
\item By Lemma \ref{geodesic-prop}.(\ref{uniq-geo-equiv}).
\item (b) is obvious. We prove in the sequel (a).  Let $W':X\times X\times[0,1]\to X$ be another convexity
mapping such that $(X,d,W')$ is
a $W$-hyperbolic space. For $\lambda\in [0,1]$ and $x\in X$ one has $W(x,x,\lambda)=W'(x,x,\lambda)=x$.
Let $x,y\in X, x\ne y$. Then $[x,y]_W$ and $[x,y]_{W'}$ are geodesic segments that join $x$ and $y$,
hence we must have that $[x,y]_W=[x,y]_{W'}$, that is $W_{xy}([0,d(x,y)])=W'_{xy}([0,d(x,y)])$.
Since $W_{xy}(0)=W'_{xy}(0)=x$, we can  apply Lemma \ref{geodesic-prop}.(\ref{uniq-geodesic}) to get
that $W_{xy}=W'_{xy}$, so that $W(x,y,\lambda)=W'(x,y,\lambda)$.
\ee
\end{proof}

An important class of $W$-hyperbolic spaces are the so-called Busemann spaces, used by
Busemann \cite{Bus48,Bus55} to define a notion of 'nonpositively curved space'.
We refer to \cite{Pap05} for an extensive study. Let us recall that a
map $\gamma:[a,b]\to X$ is an \defnterm{ affinely reparametrized geodesic} if
$\gamma$ is a constant path or there exist an interval $[c,d]$ and a geodesic
$\gamma':[c,d]\to X$ such that $\gamma=\gamma'\circ \psi$, where $\psi:[a,b]\to[c,d]$ is the unique
affine homeomorphism between the intervals $[a,b]$ and $[c,d]$.

A geodesic space $(X,d)$ is a \defnterm{Busemann space} if for any two affinely
reparametrized geodesics $\gamma:[a,b]\to X$ and $\gamma':[c,d]\to X$, the map
\beq
D_{\gamma,\gamma'}:[a,b]\times [c,d]\to \R, \quad D_{\gamma,\gamma'}(s,t)=d(\gamma(s),\gamma'(t))
\eeq
is convex.
Examples of Busemann spaces are strictly convex normed spaces. In fact, a normed space is a
Busemann space if and only if it is strictly convex.

\bprop\label{char-Busemann-W}
Let $(X,d)$ be a metric space. The following two statements are equivalent:
\be
\item $X$ is a Busemann space.
\item There exists a (unique) convexity mapping $W$ such that $(X,d,W)$ is a uniquely geodesic
$W$-hyperbolic space.
\ee
\eprop
\begin{proof} $(i)\Ra(ii)$ Assume that $X$ is Busemann. By \cite[Proposition 8.1.4]{Pap05}, any
Busemann space is uniquely geodesic. For any $x,y\in X$, let $[x,y]$ be the unique geodesic segment
that joins $x$ and $y$ and define
\beq
W:X\times X\times[0,1]\to X, \quad W(x,y,\lambda)=\lambdaxy.
\eeq
Let us verify (W1)-(W4): (W4) follows from \cite[Proposition 8.1.2.(ii)]{Pap05};
(W2) follows from Lemma \ref{geodesic-prop}.(\ref{W2-geodesic}); (W1) follows from (W4) applied with
$z=x$ and the fact that $W(x,x,\lambda)=x$; (W3) follows by
Lemma \ref{geodesic-prop}.(\ref{gamma-yx}).

$(ii)\Ra(i)$ Apply \cite[Proposition 8.1.2.(ii)]{Pap05} and (W4).
\end{proof}

A very useful feature of Busemann spaces is the following (see \cite[Proposition 8.2.4]{Pap05})

\blem\label{Bus-between}
Every Busemann space  satisfies the betweenness property (\ref{bet-prop}). Hence, Lemma \ref{between-properties} holds in Busemann spaces.
\elem

CAT(0) spaces are another very important class of $W$-hyperbolic spaces.  A {\em CAT(0) space}  is a geodesic
space satisfying the {\bf CN} inequality of Bruhat-Tits \cite{BruTit72}: for all $x,y,z\in X$  and all
$m\in X$ with $\ds d(x,m)=d(y,m)=\frac12 d(x,y)$,
\beq
d(z,m)^2\leq \frac12d(z,x)^2+\frac12d(z,y)^2-\frac14d(x,y)^2. \label{CN-ineq}
\eeq
We refer to \cite[p. 163]{BriHae99} for a proof that the above definition is equivalent with
the one using geodesic triangles. In the setting of $W$-hyperbolic spaces, we consider the following
reformulation of the {\bf CN} inequality: for all $x,y,z\in X$,
\bea
\mathbf{CN^-}: \quad\quad d\left(z,W\left(x,y,\frac12\right)\right)^2\leq \frac12d(z,x)^2+\frac12d(z,y)^2-\frac14d(x,y)^2. \label{CN-}
\eea

We refer to \cite[p. 386-388]{Koh08-book} for the proof of the following result.

\bprop\label{char-CAT0}
Let $(X,d)$ be a metric space. The following statements are equivalent:
\be
\item $X$ is a CAT(0) space.
\item There exists a (unique) convexity mapping $W$ such that $(X,d,W)$ is a $W$-hyperbolic space satisfying
the $\mathbf {CN^-}$ inequality (\ref{CN-}).
\ee
\eprop

{\bf Convention:} Given a W-hyperbolic space $(X,d,W)$ and $x,y\in X$, $\lambda\in[0,1]$, we shall use
from now on the notation $(1-\lambda)x\oplus \lambda y$ for $W(x,y,\lambda)$.

\subsection{UCW-hyperbolic spaces}

We define uniform convexity in the setting of $W$-hyperbolic spaces, following \cite[p. 105]{GoeRei84}. Thus, a
$W$-hyperbolic space $(X,d,W)$ is  \defnterm{uniformly convex} \cite{Leu07} if for
any $r>0$ and any $\varepsilon\in(0,2]$ there exists $\delta\in(0,1]$ such that
for all $a,x,y\in X$,
\begin{eqnarray}
\left.\begin{array}{l}
d(x,a)\le r\\
d(y,a)\le r\\
d(x,y)\ge\varepsilon r
\end{array}
\right\}
& \quad \Rightarrow & \quad d\left(\frac12x\oplus\frac12y,a\right)\le (1-\delta)r. \label{Ishikawa-uc-def}
\end{eqnarray}
A mapping $\eta:(0,\infty)\times(0,2]\rightarrow (0,1]$ providing such a
$\delta:=\eta(r,\varepsilon)$ for given $r>0$ and $\varepsilon\in(0,2]$ is called a
\defnterm{ modulus of uniform convexity}. We call $\eta$ \defnterm{monotone} if it decreases with
$r$ (for a fixed $\eps$).

\bprop\label{UCW-Busemann}
Any uniformly convex $W$-hyperbolic space is a Busemann space.
\eprop
\begin{proof}
Apply \cite[Proposition 5]{Leu07}, Proposition \ref{prop-W-hyp}.(\ref{pWh-uniq-geodesic}) and Proposition
\ref{char-Busemann-W}.
\end{proof}

Following \cite{Leu10}, we shall refer to  uniformly convex $W$-hyperbolic spaces with a monotone modulus of
uniform convexity as \defnterm{$UCW$-hyperbolic spaces}. Furthermore, we shall also use the notation
$(X,d,W,\eta)$ for a
$UCW$-hyperbolic space having $\eta$ as a monotone modulus of uniform convexity.

As it was proved in \cite{Leu07}, $CAT(0)$ spaces are  $UCW$-hyperbolic spaces with a modulus of uniform
convexity $\ds\eta(r,\varepsilon)=\frac{\varepsilon^2}{8}$, that does not depend on $r$ and is quadratic in
$\eps$. In particular, any $CAT(0)$ space is also a Busemann space.

The following lemma collects some useful properties of $UCW$-hyperbolic spaces. We refer to \cite{Leu07,Leu10} for the proofs.

\blem\label{eta-prop-1}
Let $(X,d,W,\eta)$ be a $UCW$-hyperbolic space. Assume that $r>0$, $\varepsilon\in(0,2]$ and $a,x,y\in X$ are such that
\[d(x,a)\le r,\,\,d(y,a)\le r \text{~and~} d(x,y)\ge\eps r.\]
Let $\lambda\in[0,1]$ be arbitrary.
\be
\item\label{Groetsch-eta} $\ds d(\lambdaxy,a)\le  \big(1-2\lambda(1-\lambda)\eta(r,\varepsilon)\big)r$;
\item\label{eta-monotone-eps} For any  $0<\psi\le\eps$,
\[\ds d(\lambdaxy,a)\le  \big(1-2\lambda(1-\lambda)\eta(r,\psi)\big)r\,;\]
\item\label{eta-monotone-s-geq-r} For any $s\geq r$,
\[d(\lambdaxy,a) \le \left(1-2\lambda(1-\lambda)\eta\left(s,\eps\right)\right)r\,.\]
\ee
\elem

\subsection{Asymptotic centers}

One of the most useful tools in metric fixed point theory is the asymptotic center technique, introduced by Edelstein \cite{Ede72,Ede74}.

Let $(X,d)$ be a metric space, $(x_n)$ be a bounded sequence in $X$ and $C\se X$ be a nonempty subset of $X$. We define the following functional:
\bua
r(\cdot, (x_n)):X\to[0,\infty), \quad r(y,(x_n)) &=&\lsupn d(y,x_n).
\eua
The {\em asymptotic radius} of $(x_n)$ {\em with respect to} $C$ is given by
\[r(C,(x_n))  = \inf\{r(y,(x_n))\mid y\in C\}.\]

A point $c\in C$ is said to be an \defnterm{asymptotic center} of $(x_n)$ {\em with respect to } $C$ if
\[r(c,(x_n))= r(C,(x_n))=\min\{r(y,(x_n))\mid y\in C\}.\]
We denote with $A(C,(x_n))$ the set of asymptotic centers of $(x_n)$ with respect to $C$. When $C=X$, we call $c$ an \defnterm{asymptotic center} of $(x_n)$ and we use the notation $A((x_n))$ for $A(X,(x_n))$.

The following lemma will be very useful in the sequel.

\blem\label{UCW-useful-unique-as-center} \cite{Leu10}\\
Let $(x_n)$ be a bounded sequence in $X$ with $A(C,(x_n))=\{c\}$ and  $(\alpha_n),(\beta_n)$ be real sequences such that $\alpha_n\geq 0$ for all $n\in\N$,
$\lsupn \alpha_n\leq 1$ and $\lsupn \beta_n\leq 0$.\\
 Assume that $y\in C$  is such that there exist $p,N\in\N$ satisfying
\[\forall n\ge N\bigg(d(y,x_{n+p})\leq \alpha_nd(c,x_n)+\beta_n\bigg).\]
Then $y=c$.
\elem

A classical result is the fact that in uniformly convex Banach spaces, bounded sequences have unique asymptotic centers with
respect to closed convex subsets. For the Hilbert ball, this was proved in \cite[Proposition 21.1]{GoeRei84}.
The following result shows that the same is true for complete $UCW$-hyperbolic spaces.

\bprop\label{UCW-unique-ac}\cite{Leu10}
Let $(X,d,W)$ be a complete $UCW$-hyperbolic space. Every bounded sequence $(x_n)$ in $X$ has a unique asymptotic center with
respect to any nonempty closed convex subset $C$ of $X$.
\eprop

\subsection{Convex functions}

Let $(X,d)$ be a geodesic space and $F:X\to(-\infty,\infty]$. The mapping $F$ is said to be {\em convex} if, for any
geodesic $\gamma$ in $X$, the function $F\circ \gamma$ is convex. Let us recall that the {\em effective domain}
of $F$ is the set $dom\, F:=\{x\in X \mid F(x)<\infty\}$ and that $F$ is {\em proper} if $dom\, F$ is nonempty.

For the rest of this section $F$ is a proper convex function.

If $x\in dom\, F$ and $\gamma:[0,c]\to X$ is a geodesic starting at $x$, the {\em directional derivative} $D_\gamma F(x)$ of $F$ at $x$ in the direction $\gamma$ is defined by
$$D_\gamma F(x):=\lim_{t\to 0^+}\frac{F(\gamma(t))-F(x)}{t}.$$
As $F$ is convex, the above limit (possibly infinite) always exist. Indeed, one can easily see that $\ds D_\gamma F(x)=\inf_{t>0}\frac{F(\gamma(t))-F(x)}{t}$.

\begin{proposition}\label{local-global-dir-der}
Let $\ol{x}\in dom\, F$. The following statements are equivalent:
\be
\item $\ol{x}$ is a local minimum of $F$.
\item $\ol{x}$ is a global minimum of $F$.
\item $D_\gamma F(\ol{x})\geq 0$ for any geodesic $\gamma:[0,c]\to X$ starting at $\ol{x}$.
\ee
\end{proposition}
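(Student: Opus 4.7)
The plan is to establish the cycle (ii) $\Rightarrow$ (i) $\Rightarrow$ (ii) together with (ii) $\Leftrightarrow$ (iii). The implication (ii) $\Rightarrow$ (i) is immediate from the definitions, so the real work lies in (i) $\Rightarrow$ (ii) and in the equivalence with the directional-derivative condition.

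For (i) $\Rightarrow$ (ii), I would fix an arbitrary $y\in X$. If $y\notin dom\,F$ then $F(y)=\infty\ge F(\ol{x})$ trivially, so assume $y\in dom\,F$. Choose a geodesic $\gamma:[0,d(\ol{x},y)]\to X$ with $\gamma(0)=\ol{x}$ and $\gamma(d(\ol{x},y))=y$. Because $F$ is convex along geodesics, $F\circ\gamma$ is a convex function on $[0,d(\ol{x},y)]$, so for every $s\in(0,1]$,
\[
F(\gamma(s\,d(\ol{x},y)))\le(1-s)F(\ol{x})+sF(y).
\]
Local minimality of $\ol{x}$ gives an open ball $B(\ol{x},\rho)$ on which $F\ge F(\ol{x})$; since $d(\gamma(s\,d(\ol{x},y)),\ol{x})=s\,d(\ol{x},y)$, for all sufficiently small $s>0$ the point $\gamma(s\,d(\ol{x},y))$ lies in $B(\ol{x},\rho)$. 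Combining the two inequalities yields $F(\ol{x})\le(1-s)F(\ol{x})+sF(y)$, which after simplification by $s>0$ gives $F(\ol{x})\le F(y)$.

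For (ii) $\Rightarrow$ (iii), if $\ol{x}$ is a global minimum then for every geodesic $\gamma$ starting at $\ol{x}$ and every $t>0$ one has $F(\gamma(t))\ge F(\ol{x})$, so each difference quotient is non-negative and hence so is the infimum, which equals $D_\gamma F(\ol{x})$. For (iii) $\Rightarrow$ (ii), I would exploit the fact recalled in the excerpt that $D_\gamma F(\ol{x})=\inf_{t>0}\frac{F(\gamma(t))-F(\ol{x})}{t}$: if this infimum is $\ge 0$ then $F(\gamma(t))\ge F(\ol{x})$ for every $t\in(0,c]$. Applying this with the geodesic joining $\ol{x}$ to an arbitrary $y\in dom\,F$ yields $F(y)\ge F(\ol{x})$, while the case $y\notin dom\,F$ is again trivial.

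The only delicate point is (i) $\Rightarrow$ (ii); everything else reduces to monotonicity of the difference quotient along the geodesic, already noted in the paragraph preceding the proposition. The key idea there is that in a geodesic space the local-to-global transition is driven by the fact that one can slide from $y$ back toward $\ol{x}$ continuously along a geodesic, so the local neighborhood hypothesis can be promoted to a global comparison via a single application of the convexity of $F\circ\gamma$.
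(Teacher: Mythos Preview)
Your proof is correct and follows essentially the same route as the paper's. The paper organizes the implications as $(i)\Rightarrow(ii)$, $(ii)\Rightarrow(i)$, $(ii)\Rightarrow(iii)$, $(iii)\Rightarrow(ii)$ and argues each one exactly as you do; the only cosmetic difference is that for $(iii)\Rightarrow(ii)$ the paper first observes that $F\circ\gamma$ is finite on $[0,d(\ol{x},z)]$ and then invokes the one-variable fact that a convex real function with nonnegative right derivative at $0$ attains its minimum there, whereas you use the infimum characterization of $D_\gamma F(\ol{x})$ directly to get $F(\gamma(t))\ge F(\ol{x})$---a slightly more streamlined version of the same idea.
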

\begin{proof}
$(i)\Ra(ii)$  Let $\eps>0$ be such that  $F(\ol{x})\leq F(x)$ for all $x\in B(\ol{x},\eps)$. Let $z\ne \ol{x}$ be arbitrary
and $\gamma:[0,d(\ol{x},z)]\to X$ be a geodesic in $X$ that joins $\ol{x}$ and $z$.  For all $t< \min\{\eps,d(\ol{x},z)\}$
we have that $d(\gamma(t),\ol{x})=t< \eps$, so that
\bua
F(\ol{x})& \leq & F(\gamma(t))=(F\circ\gamma)\left(\left(1-\frac{t}{d(\ol{x},z)}\right)0+\frac{t}{d(\ol{x},z)}d(\ol{x},z)\right)\\
& \leq & \left(1-\frac{t}{d(\ol{x},z)}\right)F(\ol{x})+\frac{t}{d(\ol{x},z)}F(z).
\eua
Hence $F(\ol{x})\leq F(z)$.

$(ii)\Ra(i)$ and $(ii)\Ra(iii)$  are immediate.

$(iii)\Ra(ii)$ Let $z\ne \ol{x}$ be arbitrary. We shall prove that $F(z)\geq F(\ol{x})$.
If $F(z)=\infty$, the conclusion is obvious, so we can assume that $z\in dom\, F$.
Let $\gamma:[0,d(\ol{x},z)]\to X$ be a geodesic that joins $\ol{x}$ and $z$. As $F$ is convex, one gets that for all
$t\in [0,d(\ol{x},z)]$,
\bua
F(\gamma(t))&\leq &\left(1-\frac{t}{d(\ol{x},z)}\right)F(\ol{x})+\frac{t}{d(\ol{x},z)}F(z)<\infty.
\eua
Thus, $F\comp\gamma:[0,d(\ol{x},z)]\to\R$ is a convex real function satisfying
$(F\comp\gamma)'(0)=D_\gamma F(\ol{x})\geq 0$. One gets that
$F(\ol{x})=(F\comp\gamma)(0)\leq(F\comp\gamma)(d(\ol{x},z))=F(z)$.
\end{proof}

\section{Firmly nonexpansive mappings}\label{cap:firmly-ne}

Firmly nonexpansive mappings were introduced by Bruck \cite{Bru73} in the context of Banach spaces and
by Browder \cite{Bro67}, under the name of {\em firmly contractive}, in the setting of Hilbert spaces. We refer to
\cite[Section 24]{GoeRei84} for a study of this class of mappings in the Hilbert ball.

Bruck's definition can be extended to $W$-hyperbolic spaces. Let $(X,d,W)$ be a $W$-hyperbolic space, $C\se X$ and $T:C\to X$.
Given $\lambda\in (0,1)$, we say that
$T$ is \defnterm{$\lambda$-firmly nonexpansive} if for all $x,y\in C$,
\beq
d(Tx,Ty)\leq d((1-\lambda)x\oplus\lambda Tx,(1-\lambda)y\oplus \lambda Ty)\quad \text{for all }x,y\in C.
\label{def-lambda-fe}
\eeq
If (\ref{def-lambda-fe}) holds for all $\lambda\in (0,1)$, then $T$ is said to be {\em firmly nonexpansive}.

Applying (W4) one gets that any $\lambda$-firmly nonexpansive mapping is nonexpansive, i.e. it satisfies
$d(Tx,Ty)\le d(x,y)$ for all $x,y\in C$.

The first example of a firmly nonexpansive mapping is the metric projection in a CAT(0) space. Let us
recall that a subset $C$ of a metric space $(X,d)$ is called a {\em Chebyshev set} if to each point
$x\in X$ there corresponds a unique point $z\in C$ such that
$d(x,z)= d(x,C)$, where $d(x,C)=\inf\{d(x,y)\mid y\in C\}$. If $C$ is a Chebyshev set, the
{\em metric projection}
$P_C:X\to C$ can be defined by assigning $z$ to $x$.

By \cite[Proposition II.2.4]{BriHae99}, any closed convex subset $C$ of a CAT(0) space is a Chebyshev
set, the metric projection $P_C$ is nonexpansive and $P_C((1-\lambda)x\oplus\lambda P_Cx)=P_C(x)$ for
all $x\in X$ and all $\lambda\in (0,1)$. It is well known that in the setting of Hilbert spaces the metric projection
is firmly nonexpansive. We remark that for the Hilbert ball this was proved in \cite[p. 111]{GoeRei84}.
The following result shows
that the same holds in general CAT(0) spaces.

\bprop
Let $C$ be a nonempty closed convex subset of a CAT(0) space $(X,d)$. The metric projection $P_C$ onto
$C$ is a firmly nonexpansive mapping.
\eprop
\begin{proof}
Let $x,y\in X$ and $\lambda\in(0,1)$. One gets that
\bua
d(P_Cx,P_Cy)&= & d\big(P_C((1-\lambda)x\oplus\lambda P_Cx),P_C((1-\lambda)y\oplus\lambda P_Cy)\big) \\
                &\leq & d((1-\lambda)x\oplus\lambda P_Cx,(1-\lambda)y\oplus\lambda P_Cy).
\eua
\end{proof}

Bruck \cite{Bru73} showed for Banach spaces that one can associate to any nonexpansive mapping a
family of firmly nonexpansive mappings having the same fixed points. Goebel and Reich \cite{GoeRei84}
obtained the same result for the Hilbert ball. We show in the sequel that Bruck's construction can be
adapted also to Busemann spaces.

Let $C$ be a nonempty closed convex subset of a complete Busemann space $X$ and $T:C\to C$ be nonexpansive. For $t\in (0,1)$ and $x\in C$ define
\beq
T_t^x:C\to C, \quad T_t^x(y)=(1-t)x\oplus tT(y). \label{averaged-u}
\eeq
Using (W4), one can easily see that $T_t^x$ is a contraction, so it has a unique fixed point $z_t^x\in C$, by
Banach's Contraction Mapping Principle. Let
\beq
U_t:C\to C, \quad U_t(x)=z_t^x.
\eeq
Then $U_t(x)=(1-t)x\oplus tT(U_t(x))$ for all $x\in C$.

\bprop
$U_t$ is a firmly nonexpansive mapping having the same set of fixed points as $T$.
\eprop
\begin{proof}
Let $\lambda\in(0,1)$ and $x,y\in C$. Denote $u:=(1-\lambda)x\oplus \lambda U_t(x)$ and
$v:=(1-\lambda)y\oplus \lambda U_t(y)$.  We can apply Lemma \ref{uniq-geodesic-prop}.(\ref{useful-firmly-ne}) twice to get that
\bua
U_t(x)=(1-\mu)u\oplus \mu T(U_t(x)), \,\,\,\, U_t(y)=(1-\mu)v\oplus \mu T(U_t(y)), \quad{ where }\,
\mu=\frac{(1-\lambda)t}{1-\lambda t}.
\eua
It follows that
\bua
d(U_t(x),U_t(y)) &=& d((1-\mu)u\oplus \mu T(U_t(x)), (1-\mu)v\oplus \mu T(U_t(y)))\\
&\leq & (1-\mu)d(u,v)+\mu d(T(U_t(x)),T(U_t(y)))\\
&\leq & (1-\mu)d(u,v)+\mu d(U_t(x),U_t(y)).
\eua
Thus, $d(U_t(x),U_t(y)) \leq d(u,v)$, so $U_t$ is $\lambda$-firmly nonexpansive.

The fact that $U_t$ and $T$ have the same set of fixed points is immediate.

\end{proof}

A third example of a firmly nonexpansive mapping is the resolvent of a proper, convex and lower semicontinuous
mapping in a CAT(0) space.

Let $(X,d)$ be a CAT(0) space, $F:X\to(-\infty,\infty]$ and $\mu>0$. Following Jost \cite{Jos95}, the
{\em Moreau-Yosida approximation} $F^\mu$ of $F$ is defined by
\begin{equation}\label{eq:def-F-mu}
 F^\mu(x):=\inf_{y\in X}\big\{\mu\,F(y)+d(x,y)^2\big\}.
\end{equation}
We refer to \cite{Bac12,Sto11} for applications of the Moreau-Yosida approximation in CAT(0) spaces.

Jost proved \cite[Lemma 2]{Jos95} that if $F:X\to(-\infty,\infty]$ is proper, convex and lower semicontinuous,
then  for every $x\in X$ and $\mu>0$, there exists a unique $y_\mu\in X$ such that
 $$F^\mu(x) = \mu\,F(y_\mu )+d(x,y_\mu)^2.$$
We denote this $y_\mu$ with $J_\mu(x)$ and call $J_\mu$  the {\em  resolvent} of $F$ of order $\mu$.

In the same paper, Jost shows that for all $\mu >0$ the resolvent $J_\mu$ is nonexpansive
\cite[Lemma 4]{Jos95} and, furthermore, that for all $\lambda\in[0,1]$,
\begin{equation}\label{eq:J-mu-x-s}
J_{(1-\lambda)\mu}\big((1-\lambda)x\oplus \lambda J_\mu(x)\big)=J_\mu(x) \quad (\text{see
\cite[Corollary 1]{Jos95}})
\end{equation}

\bprop\label{resolvent-firmly-ne}
Let $F:X\to(-\infty,\infty]$ be proper, convex and lower semicontinuous. Then for every $\mu>0$, its resolvent
$J_\mu$ is a firmly nonexpansive mapping.
\eprop
\begin{proof}
Let $x,y\in X$ and $\lambda\in(0,1)$. Then
\bua
d(J_\mu(x),J_\mu(y))&=& d(J_{(1-\lambda)\mu}\big((1-\lambda)x\oplus \lambda J_\mu(x)\big),J_{(1-\lambda)\mu}
\big((1-\lambda)y\oplus \lambda J_\mu(y)\big))\\
&\leq & d\big((1-\lambda)x\oplus \lambda J_\mu(x),(1-\lambda)y\oplus \lambda J_\mu(y)\big).
\eua
\end{proof}

Another example of a firmly nonexpansive mapping, given by Kopeck\'{a} and Reich \cite[Lemma 2.2]{KopRei09}, is the
resolvent of a coaccretive operator in the Hilbert ball.

\section{A fixed point theorem}\label{cap:fpt-theorem}

Given a subset $C$ of a metric space $(X,d)$, a nonexpansive mapping
$T:C\to C$ and $x\in C$, the {\em orbit} $\cO(x)$ of $x$ under $T$ is defined by
$\cO(x)=\{T^nx\mid n=0,1,2,\ldots\}$.  As an immediate consequence of the nonexpansiveness of $T$, if $\cO(x)$ is bounded for some $x\in C$, then all other orbits $\cO(y), \, y\in C$ are bounded. If this is the case, we say that $T$ {\em has bounded orbits}. Obviously, if $T$ has fixed points, then $T$ has bounded orbits.

In this section we prove the following fixed point theorem.

\bthm\label{thm-fpt-gen-Sma}
Let $(X,d,W)$ be a complete $UCW$-hyperbolic space, $\ds C=\bigcup_{k=1}^pC_k$ be a union of nonempty closed
convex subsets $C_k$ of $X$, and $T:C\to C$ be $\lambda$-firmly nonexpansive for some $\lambda\in(0,1)$.
The following two statements are equivalent:
\be
\item $T$ has bounded orbits.
\item $T$ has fixed points.
\ee
\ethm

Let us remark that fixed points are not guaranteed if $T$ is merely nonexpansive, as the following trivial example shows. Let $x\ne y\in X$, take $C_1=\{x\},\, C_2=\{y\}, \, C=C_1\cup C_2$ and $T:C\to C, \, T(x)=y, T(y)=x$. Then $T$ is fixed point free and nonexpansive. If $T$ were $\lambda$-firmly nonexpansive for some $\lambda\in(0,1)$, we would get
\bua
0<d(x,y)&=& d(Tx,Ty)\leq d((1-\lambda)x\oplus\lambda Tx,(1-\lambda)y\oplus \lambda Ty)\\
&=& d((1-\lambda)x\oplus\lambda y,\lambda x\oplus (1-\lambda)y)=|2\lambda-1|d(x,y)\quad \text{by (W2)}\\
&< & d(x,y),
\eua
that is a contradiction.

As an immediate consequence, we get a strengthening of Smarzewski's fixed point theorem for uniformly convex
Banach spaces \cite{Sma91}, obtained by weakening the hypothesis of $C_k$ being bounded for all $k=1,\ldots, p$
to $T$ having bounded orbits.

\bcor\label{fpt-cor-firmly}
Let $X$ be a uniformly convex Banach space, $\ds C=\bigcup_{k=1}^pC_k$ be a union of nonempty closed convex
subsets $C_k$ of $X$, and $T:C\to C$ be $\lambda$-firmly nonexpansive for some $\lambda\in(0,1)$.

Then $T$ has fixed points if and only if  $T$ has bounded orbits.
\ecor

Theorem \ref{thm-fpt-gen-Sma} follows from the following Propositions \ref{UCW-T-ne-periodic} and
\ref{firmly-ne-periodic-fixed}.

\bprop\label{firmly-ne-periodic-fixed}
Let $X$ be a Busemann space, $C\se X$ be nonempty and $T:C\to C$ be $\lambda$-firmly nonexpansive for some
$\lambda\in(0,1)$. Then any periodic point of $T$ is a fixed point of $T$.
\eprop
\begin{proof}
Let $x$ be a periodic point of $T$  and $m\ge 0$ be minimal with the property that $T^{m+1}x=x$. If $m=0$,
then $x$ is a fixed point of $T$, hence we can assume that $m\ge 1$.
Since $T$ is nonexpansive, we have
\bua
d(x,T^mx)&=& d(T^{m+1}x,T^mx)\leq d(T^mx,T^{m-1}x)\leq\cdots\leq d(Tx,x)\\
&=& d(Tx,T^{m+1}x)\leq d(x,T^mx),
\eua
hence we must have equality everywhere, that is
\beq
d(Tx,x)=d(T^2x,Tx)=\ldots =d(T^mx,T^{m-1}x)=d(x,T^mx):=\gamma>0,
\eeq
since $T^mx\ne x$, by the hypothesis on $m$.
Applying now the fact that $T$ is $\lambda$-firmly nonexpansive, we get for all $k=1,\ldots,m$
\bua
\gamma&= & d(T^{k+1}x,T^kx)\le d((1-\lambda)T^kx\oplus\lambda T^{k+1}x, (1-\lambda)T^{k-1}x\oplus\lambda T^kx)\\
&\le & d((1-\lambda)T^kx\oplus\lambda T^{k+1}x,T^kx)+d(T^kx, (1-\lambda)T^{k-1}x\oplus\lambda T^kx)\\
&=& \lambda d(T^kx,T^{k+1}x)+(1-\lambda)d(T^{k-1}x,T^kx)=\lambda \gamma+(1-\lambda)\gamma=\gamma.
\eua
Hence, we must have
\beq
\gamma= d(\alpha_k, \beta_k)=d(\alpha_k, T^kx)+d(T^kx,\beta_k),
\eeq
where
\bua
\alpha_k &:= & (1-\lambda)T^kx\oplus\lambda T^{k+1}x,\\
\beta_k&:= &(1-\lambda)T^{k-1}x\oplus\lambda T^kx=\lambda T^kx\oplus (1-\lambda)T^{k-1}x.
\eua
We have the following cases:
\be
\item $m=1$, hence $k=1$. Then $T^{m-1}x=x$ and
\[\alpha_1:=(1-\lambda)Tx\oplus\lambda x,\quad \beta_1=\lambda Tx\oplus (1-\lambda)x.\]
 It follows by (W2) that
\bua
\gamma=d(\alpha_1, \beta_1) = |\lambda-(1-\lambda)|d(x,Tx)=|2\lambda-1|\gamma,
\eua
hence $|2\lambda-1|=1$, which is impossible, since $\lambda\in (0,1)$.
\item $m\ge 2$, hence  $m-1\ge 1$. Since $T^kx$ lies between $\beta_k$ and $\alpha_k$ and, furthermore,
$\alpha_k$ lies between $T^kx$ and $T^{k+1}x$, we can apply Lemma \ref{Bus-between} twice to get firstly that
$T^kx$ lies between $\beta_k$ and $T^{k+1}x$ and secondly, since $\beta_k$ lies between $T^{k-1}x$ and $T^kx$,
that $T^kx$ lies between $T^{k-1}x$ and $T^{k+1}x$ for all $k=1,\ldots, m$.

Apply now Lemma \ref{between-properties} to conclude that $T^{m-1}x$ lies between $x$ and $T^mx$, hence
\bua
\gamma = d(x,T^mx)= d(T^{m-1}x,T^mx)+d(T^{m-1}x,x)=\gamma+d(T^{m-1}x,x)>\gamma,
\eua
since $T^{m-1}x\ne x$. We have got a contradiction.
\ee
\end{proof}

We remark that Proposition \ref{firmly-ne-periodic-fixed} holds for strictly convex Banach spaces too, as
they are Busemann spaces.

\blem\label{Smarzewski-lemma-1}
Let $(X,d)$ be a metric space, $\ds C=\bigcup_{k=1}^pC_k$ be a union of nonempty subsets $C_k$ of $X$, and
$T:C\to C$ be nonexpansive. Assume that $T$ has bounded orbits and that for some $z\in C$, the orbit $(T^nz)$
of $T$ has a unique asymptotic center $x_k$ with respect to every $C_k,\, k=1,\ldots, p$.

Then one of $x_k,\, k=1,\ldots, p$ is a periodic point of $T$.
\elem
\begin{proof}
Since $T$ is nonexpansive, we have that
\bea
d(Tx_k,T^{n+1}z)&\leq& d(x_k,T^nz) \quad\text{for all }n\ge 0, \,k=1,\ldots,p, \text{ hence}
\label{smarzewski-eq-1}\\
r(Tx_k,(T^nz))& \leq & r(x_k,(T^nz)) \quad\text{for all } \,k=1,\ldots,p.\label{smarzewski-eq-2}
\eea
If there exists $k_0\in\{1,\ldots,p\}$ such that $Tx_{k_0}\in C_{k_0}$, then applying
Lemma \ref{UCW-useful-unique-as-center} with $y=Tx_{k_0}, p=1, \alpha_n=1, \beta_n=0$ and $x_n=T^nz$, we have
that $Tx_{k_0}=x_{k_0}$, that is, $x_{k_0}$ is a fixed point of $T$. In particular, $x_{k_0}$ is a periodic
point of $T$.

Otherwise, assume that $Tx_k\not\in C_k$ for all $1\leq k\leq p$. It is easy to see that there exist integers
$\{n_1,n_2,\ldots,n_m\}\subseteq\{1,2,\ldots,p\}$, with $m\geq2$, such that $Tx_{n_k}\in C_{n_{k+1}}$ for all
$k=1,\ldots,m-1$ and $Tx_{n_m}\in C_{n_1}$.

Applying repeatedly (\ref{smarzewski-eq-2}) and the fact that $x_{n_k}$ is the unique asymptotic center of
$(T^nz)$ with respect to $C_{n_k}$, we get that
\bua
 r(x_{n_1},(T^nz))& \le & r(Tx_{n_m},(T^nz)) \leq r( x_{n_m},(T^nz)\}) \leq \ldots \leq r(Tx_{n_1},(T^nz))\\
 &\leq & r(x_{n_1},(T^nz)).
\eua
Thus, we must have equality everywhere. We get that $r(x_{n_1},(T^nz))=r(Tx_{n_m},(T^nz))$ and
$r(Tx_{n_k},(T^nz))=r(x_{n_{k+1}},(T^nz))$ for all $k=1,\ldots,m-1$. By the uniqueness of the asymptotic
centers, we get that
\beq
x_{n_1}=Tx_{n_m}\text{ and } Tx_{n_k}=x_{n_{k+1}}\text{ for all } k=1,\ldots,m-1.
\eeq
It follows that $T^{m}x_{n_1}=x_{n_1}$, hence $x_{n_1}$ is a periodic point of $T$.
\end{proof}

\bprop\label{UCW-T-ne-periodic}
Let $(X,d,W)$ be a complete $UCW$-hyperbolic space, $\ds C=\bigcup_{k=1}^pC_k$ be a union of nonempty closed
convex subsets $C_k$ of $X$, and $T:C\to C$ be a nonexpansive mapping having bounded orbits.

Then $T$ has periodic points.
\eprop
\begin{proof}
By Proposition \ref{UCW-unique-ac}, for all $z\in C$ and for all $k=1,\ldots, p$, the orbit $(T^nz)$ has a
unique asymptotic center $x_k$ with respect to $C_k$. Apply Lemma \ref{Smarzewski-lemma-1} to get that one of
the asymptotic centers $x_k,\, k=1,\ldots,p$ is a periodic point of $T$.
\end{proof}

\section{Asymptotic behaviour of Picard iterations}\label{cap:Picard-iterates}

The second main result of the paper is a theorem on the asymptotic behaviour of Picard iterations of
$\lambda$-firmly nonexpansive mappings, which generalizes results obtained by Reich and Shafrir \cite{ReiSha87}
for firmly nonexpansive mappings in Banach spaces and the Hilbert ball.

\bthm\label{Picard-thm}
Let $C$  be a subset of a W-hyperbolic space $X$ and $T:C\to C$  be a $\lambda$-firmly nonexpansive
mapping with $\lambda\in(0,1)$. Then for all $x\in X$ and $k\in\Z_+$,
\bua
\limn d(T^{n+1}x,T^nx)=\frac{1}{k}\limn d(T^{n+k}x,T^nx)=\limn\frac{d(T^nx,x)}{n}=r_C(T),
\eua
where $r_C(T):=\inf\{d(x,Tx)\mid x\in C\}$ is the {\em minimal displacement } of  $T$.
\ethm

The mapping $T$ is said to be {\em asymptotically regular at} $x\in C$ if
$\displaystyle\limn d(T^nx,T^{n+1}x)=0$. If this is true for all $x\in C$, we say that $T$ is
{\em asymptotically regular}.

Before proving Theorem \ref{Picard-thm}, we give the following immediate consequences.

\bcor\label{rCT-as-reg}
The following statements are equivalent:
\be
\item $T$ is asymptotically regular at some $x\in C$.
\item $r_C(T)=0$.
\item $T$ is asymptotically regular.
\ee
\ecor

\bcor\label{T-bounded-orbits-as-reg}
If $T$ has bounded orbits, then $T$ is asymptotically regular.
\ecor

\begin{remark}
As Adriana Nicolae pointed out to us in a private communication, one can easily see that Proposition \ref{firmly-ne-periodic-fixed}
is an immediate consequence of the above corollary.  However, our proof of this proposition holds (with small adaptations) also in
more general spaces like geodesic spaces with the betweenness property (see \cite{Nic12}), for which it is not known whether
Corollary  \ref{T-bounded-orbits-as-reg} is true.
\end{remark}

\subsection{Proof of Theorem \ref{Picard-thm}}\label{cap:proof-Picard-iterations}

In the sequel, $X$ is a $W$-hyperbolic space, $C\se X$ and $T:C\to C$.

\blem\label{lema1-Picard-thm}
Assume that $T$ is nonexpansive and $x\in C$.
\be
\item\label{Rk-R1} For all $k\ge 1$, $\ds R_k:=\limn d(T^{n+k}x,T^nx)$ exists and $R_k\le kR_1$.
\item $L:=\ds \limn\frac{d(T^nx,x)}{n}$ exists and equals $\ds \inf_{n\ge 1}\frac{d(T^nx,x)}{n}$. Moreover,
$L$ is independent of  $x$.
\item\label{L-le-R1} $L\le r_C(T)\le R_1$.
\ee
\elem
\begin{proof}
\be
\item Since the sequence $(d(T^{n+k}x,T^nx))_n$ is nonincreasing, obviously $R_k$ exists. Remark that
\bua
d(T^{n+k}x,T^nx)\leq \sum_{i=0}^{k-1}d(T^{n+i}x,T^{n+i+1}x)\le kd(T^{n+1}x,T^nx)
\eua
and let $n\to\infty$ to conclude that $R_k\le kR_1$.
\item One has that for all $m,n\ge 1$,
\bua
d(T^{m+n}x,x)\leq d(T^{m+n}x,T^nx)+d(T^nx,x)\le d(T^mx,x)+d(T^nx,x),
\eua
hence the sequence $(d(T^nx,x))$ is subadditive. Apply now Fekete's subadittive lemma \cite{Fek23} to get
that $\ds L=\inf_{n\ge 1}\frac{d(T^nx,x)}{n}$. The independence of $x$ follows from the fact that for all
$x,y\in C$,
\bua
d(T^nx,x)-d(T^ny,y) \le d(T^nx,T^ny)+d(x,y)\le 2d(x,y).
\eua
\item Obviously, $\ds R_1=\inf_{n\ge 1} d(T^nx,T^{n+1}x)\ge r_C(T)$. Given  $\eps>0$, there is a point $y\in C$ such that $r_C(T)\leq d(y,Ty)<r_C(T)+\eps$. It follows that
\bua
L & = & \limn \frac{d(T^nx,x)}{n}=\limn \frac{d(T^ny,y)}{n}=\inf_{n\ge 1}\frac{d(T^ny,y)}{n}\le d(Ty,y)\\
& <  & r_C(T)+\eps.
\eua
As $\eps>0$ was arbitrary, we get that $L\le r_C(T)$.
\ee
\end{proof}

\blem
Let $T$ be $\lambda$-firmly nonexpansive for some $\lambda\in(0,1)$. Then for all $x,y\in C$,
\beq
d(Tx,Ty)\leq \frac{1-\lambda}{1+\lambda}d(x,y)+\frac{\lambda}{1+\lambda}(d(Tx,y)+d(x,Ty)).
\label{ineq-lambda-fe-1}
\eeq
\elem
\begin{proof}
Apply (W1) more times to get that
\bua
d(Tx,Ty)&\leq & d((1-\lambda)x\oplus\lambda Tx,(1-\lambda)y\oplus \lambda Ty))\\
&\leq & (1-\lambda)d((1-\lambda)x\oplus\lambda Tx,y)+\lambda d((1-\lambda)x\oplus\lambda Tx,Ty)\\
&\leq & (1-\lambda)^2d(x,y)+\lambda(1-\lambda)\big(d(Tx,y)+d(x,Ty)\big)+\lambda^2 d(Tx,Ty).
\eua
\end{proof}

{\bf Proof of Theorem \ref{Picard-thm}}\\

\noindent We prove that $R_k=kR_1$ for all $k\ge 1$ by induction on $k$. Assume that $R_j=jR_1$
for all $j=1,\ldots, k$ and let $\eps >0$. Since $(d(T^{n+j}x,T^nx))$ is nonincreasing, we get $N_\eps\ge 1$ such that
for any $j=1,\ldots, k$ and for all $n\geq N_\eps$,
\beq
R_1\leq\frac{1}{j}\,d(T^{n+j}x,T^nx)\leq R_1+\eps
\eeq
Let $n\ge N_\eps$. By (\ref{ineq-lambda-fe-1}), we get that
$$\ds d(T^{n+1}x,T^{n+k+1}x) \leq  \frac{1-\lambda}{1+\lambda}d(T^nx,T^{n+k}x)
+\frac{\lambda}{1+\lambda}\big(d(T^{n+1}x,T^{n+k}x)+d(T^nx,T^{n+k+1}x)\big),$$
hence
\bua
d(T^nx,T^{n+k+1}x)& \geq &  \frac{1+\lambda}{\lambda}d(T^{n+1}x,T^{n+k+1}x)-\frac{1-\lambda}{\lambda}d(T^{n+k}x,T^nx)-\\
&& - d(T^{n+1}x,T^{n+k}x)\\
&\geq & \frac{1+\lambda}{\lambda}\,k\,R_1-\frac{1-\lambda}{\lambda}\,k\,(R_1+\eps)-(k-1)(R_1+\eps)\\
& = & (k+1)\,R_1+\left(1-\frac{k}{\lambda}\right)\,\eps.
\eua
By letting $n\to\infty$, it follows that $R_{k+1}\ge (k+1)R_1$, as $\eps>0$ is arbitrary. Apply now
Lemma \ref{lema1-Picard-thm}.(\ref{Rk-R1}) to conclude that  $R_{k+1}=(k+1)R_1$.

Since $d(T^{n+k}x,T^nx)\leq d(T^kx,x)$ for all $k,n\ge 1$, let $n\to\infty$ to get that $\ds R_1\leq \frac{d(T^kx,x)}k$ for all $k\ge 1$ and, as a consequence, $R_1\le L$. Apply now Lemma \ref{lema1-Picard-thm}.(\ref{L-le-R1}) to conclude that $L=R_1=r_C(T)$. $\hfill\qed$

\section{$\Delta$-convergence of Picard iterates}\label{cap:Picard-delta}

In 1976, Lim \cite{Lim76} introduced a concept of convergence in the general setting of metric spaces,
which is known as $\Delta$-convergence. Kuczumow \cite{Kuc80} introduced an identical notion of
convergence in Banach spaces, which he called {\em almost convergence}. As shown in \cite{KirPan08},
$\Delta$-convergence could be regarded, at least for CAT(0) spaces, as an analogue to the usual weak
convergence in Banach spaces. Jost \cite{Jos94} introduced a notion of weak convergence in CAT(0) spaces, which was
rediscovered by Esp\'{\i}nola and Fern\'{a}ndez-Le\'{o}n \cite{EspFer09}, who also proved that it is equivalent
to $\Delta$-convergence. We refer to \cite{Sos04} for other notions of weak convergence in geodesic spaces.

Let $(x_n)$ be a bounded sequence of a metric space $(X,d)$. We say that $(x_n)$
{\em $\Delta$-converges} to $x$ if $x$ is the unique asymptotic center of $(u_n)$ for every
subsequence $(u_n)$ of $(x_n)$. In this case, we write $x_n\conv{\Delta}x$ or $\Dlimn x_n=x$ and
we call $x$ the $\Delta$-limit of $(x_n)$.

Let $(X,d)$ be a metric space and $F\se X$ be a nonempty subset. A sequence $(x_n)$ in $X$ is said to
be \defnterm{Fej\' er monotone} with respect to $F$ if
\beq
d(p,x_{n+1})\le d(p,x_n)\quad \text{ for all } p\in F \text{ and }n\ge 0.
\eeq
Thus each point in the sequence is not further from any point in $F$ than its predecessor. Obviously,
any Fej\' er monotone sequence $(x_n)$ is bounded and moreover $(d(x_n,p))$ converges for every $p\in F$.

The following lemma is very easy to prove.

\blem\label{Delta-Fejer}
Let $(X,d)$ be a metric space, $F\se X$ be a nonempty subset and $(x_n)$ be Fej\' er monotone with respect to $F$. Then
\be
\item For all $p\in F$, $(d(p,x_n))$ converges and $r(p,(x_n))=\limn d(p,x_n)$.
\item Every subsequence $(u_n)$ of $(x_n)$ is Fej\' er monotone with respect to $F$ and for all
$p\in F$, $ r(p,(u_n))= r(p,(x_n))$. Hence, $r(F,(u_n))=r(F,(x_n))$ and $A(F,(u_n))=A(F,(x_n))$.
\item\label{as-cen-unic-Delta} If $A(F,(x_n))=\{x\}$ and $A((u_n))\se F$ for every subsequence
$(u_n)$ of $(x_n)$, then $(x_n)$ $\Delta$-converges  to $x\in F$.
\ee
\elem

Furthermore, one has the following result, whose proof is very similar to the one in strictly
convex Banach spaces. For the sake of completeness, we give it here.

\blem\label{Fix-closed-convex}
Let $C$ be a nonempty closed convex subset of a uniquely geodesic space $(X,d)$ and $T:C\to C$ be
nonexpansive. Then the set $Fix(T)$ of fixed points of $T$  is closed and convex.
\elem
\begin{proof}
The fact that $Fix(T)$ is closed is immediate from the continuity of $T$. We shall  prove its
convexity. Let $x,y\in Fix(T)$ be distinct  and $z\in[x,y]$. Then
\bua
d(x,y)&\le & d(x,Tz)+d(Tz,y)=d(Tx,Tz)+d(Tz,Ty)\leq d(x,z)+d(z,y)=d(x,y).
\eua
 Thus, $d(x,Tz)+d(Tz,y)=d(x,y)$, so that $Tz\in [x,y]$. We apply now
Lemma \ref{geodesic-prop}.(\ref{wz-xy-lies}) to get the following cases:
\bua
(i) & d(x,z)+d(z,Tz)=d(x,Tz)=d(Tx,Tz)\leq d(x,z),\\
(ii) & d(y,z)+d(z,Tz)=d(y,Tz)=d(Ty,Tz)\leq d(y,z).
\eua
In both cases, it follows that $Tz=z$.
\end{proof}

\bprop\label{Delta-conv-Picard-ne}
Let $(X,d,W)$ be a complete $UCW$-hyperbolic space, $C\se X$ be nonempty closed convex and
$T:C\to C$ be a nonexpansive mapping with $Fix(T)\neq\emptyset$. If $T$ is asymptotically regular at
$x\in C$, then the Picard iterate $(T^nx)$ $\Delta$-converges to a fixed point of $T$.
\eprop
\begin{proof}
By Lemma \ref{Fix-closed-convex}, the nonempty set $F:=Fix(T)$ is closed and convex. Furthermore,
one can see easily that $(T^nx)$ is Fej\' er monotone with respect to $F$ and, by
Theorem \ref{UCW-unique-ac}, $(T^nx)$ has a unique asymptotic center with respect to $F$.
Let $(u_n)$ be a subsequence of $(T^nx)$ and $u$ be its unique asymptotic center. Then
\bua
d(Tu,u_n)\leq d(Tu,Tu_n)+d(Tu_n,u_n)\leq d(u,u_n)+d(u_n,Tu_n),
\eua
so we can use Lemma \ref{UCW-useful-unique-as-center} to obtain that $Tu=u$, i. e.  $u\in F$. Apply
Lemma \ref{Delta-Fejer}.(\ref{as-cen-unic-Delta}) to get the conclusion.
\end{proof}

By \cite[Theorem 3.5]{Leu10} one can replace in the above theorem the assumption that $T$ has
fixed points with the equivalent one that $T$ has bounded orbits.

We get the following $\Delta$-convergence result for the Picard iteration of a firmly nonexpansive mapping.

\begin{theorem}\label{Delta-conv-Picard-firmly-ne}
Let $(X,d,W)$ be a complete $UCW$-hyperbolic space, $C\se X$ be nonempty closed convex and
$T:C\to C$ be a $\lambda$-firmly nonexpansive mapping for some $\lambda\in(0,1)$.
Assume that $Fix(T)\neq\emptyset$. Then for all $x$ in $C$, $(T^nx)$ $\Delta$-converges to a fixed point of
$T$.
\end{theorem}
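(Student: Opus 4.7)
The plan is to deduce this from Proposition \ref{Delta-conv-Picard-ne}, which already establishes $\Delta$-convergence of Picard iterates to a fixed point whenever the underlying space is complete $UCW$-hyperbolic, $C$ is nonempty closed convex, $T:C\to C$ is nonexpansive with a fixed point, and $T$ is asymptotically regular at the starting point. Since any $\lambda$-firmly nonexpansive mapping is nonexpansive (as noted after the definition of $\lambda$-firmly nonexpansive, via (W4)), all the hypotheses of that proposition are met except possibly asymptotic regularity at $x$.

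Thus the only verification needed is that $T$ is asymptotically regular at the given $x \in C$. This follows immediately from Corollary \ref{T-bounded-orbits-as-reg} once we know $T$ has bounded orbits. But boundedness of orbits is automatic from the assumption $Fix(T) \neq \emptyset$: picking any $p \in Fix(T)$, nonexpansiveness gives $d(T^n x, p) \le d(x,p)$ for every $n$, so the orbit $\mathcal{O}(x)$ is bounded, and hence all orbits of $T$ are bounded.

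With $T$ asymptotically regular at $x$ in hand, Proposition \ref{Delta-conv-Picard-ne} applied to $T$ and the starting point $x$ yields that $(T^n x)$ $\Delta$-converges to some fixed point of $T$, which is exactly the conclusion. There is no genuine obstacle: the theorem is a direct corollary obtained by combining the nonexpansive $\Delta$-convergence result with the asymptotic regularity guaranteed by Theorem \ref{Picard-thm} (through Corollary \ref{T-bounded-orbits-as-reg}) for firmly nonexpansive mappings with bounded orbits.
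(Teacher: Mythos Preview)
Your proposal is correct and follows essentially the same approach as the paper: establish asymptotic regularity of $T$ and then invoke Proposition~\ref{Delta-conv-Picard-ne}. The only cosmetic difference is that the paper observes $Fix(T)\neq\emptyset\Rightarrow r_C(T)=0$ and applies Corollary~\ref{rCT-as-reg}, whereas you observe $Fix(T)\neq\emptyset\Rightarrow$ bounded orbits and apply Corollary~\ref{T-bounded-orbits-as-reg}; both routes are immediate consequences of Theorem~\ref{Picard-thm}.
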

\begin{proof}
Since $Fix(T)\neq\emptyset$, we get that $r_C(T)=0$, so, by Corollary \ref{rCT-as-reg}, that $T$ is
asymptotically regular. Apply now Proposition \ref{Delta-conv-Picard-ne}.
\end{proof}

\subsection{An application to a minimization problem}

Let $(X,d)$ be a complete CAT(0) space and $F:X\to(-\infty,\infty]$ be a proper, convex and lower
semicontinuous mapping. We shall apply Theorem \ref{Delta-conv-Picard-firmly-ne} to approximate the
minimizers of $F$, that is the solutions of  the minimization problem $\ds \min_{x\in X}F(x)$.

Let $\ds \argmin_{y\in X}F(y)=\{x\in X\mid  F(x)\leq F(y) \text{ for all }y\in X\}$ be the set of minimizers of
$F$. The following result  is a consequence of the definition of the resolvent and
Proposition \ref{local-global-dir-der}.

\begin{proposition}\label{pro:fixedpoint-min}
For all $\mu>0$,  the set $Fix(J_\mu)$ of fixed points of the resolvent associated with $F$ coincides with the
set $\ds \argmin_{y\in X}F(y)$ of minimizers of $F$.
\end{proposition}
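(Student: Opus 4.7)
The plan is to prove the two set inclusions separately, drawing on the uniqueness statement built into Jost's definition of the resolvent for one direction and on Proposition \ref{local-global-dir-der} for the other.

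For $\argmin_{y\in X} F(y) \subseteq Fix(J_\mu)$, suppose $x$ is a global minimizer of $F$. Then for every $y\in X$,
\[
\mu F(x) + d(x,x)^2 \;=\; \mu F(x) \;\leq\; \mu F(y) \;\leq\; \mu F(y)+d(x,y)^2,
\]
so $x$ itself minimizes the functional $y\mapsto \mu F(y)+d(x,y)^2$ over $X$. Since Jost's result (cited in the excerpt) guarantees that this minimizer is unique and by definition equal to $J_\mu(x)$, we conclude $J_\mu(x)=x$.

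For the reverse inclusion, suppose $J_\mu(x)=x$. Then $F^\mu(x)=\mu F(x)+d(x,x)^2=\mu F(x)$, so in particular $x\in dom\, F$ and from the definition of $F^\mu(x)$ as an infimum we obtain
\beq
\mu F(x) \;\leq\; \mu F(y)+d(x,y)^2 \quad\text{for every } y\in X. \label{eq:res-min}
\eeq
To deduce $x\in\argmin F$, I would verify condition (iii) of Proposition \ref{local-global-dir-der}. Let $\gamma:[0,c]\to X$ be an arbitrary geodesic starting at $x$. Setting $y=\gamma(t)$ in (\ref{eq:res-min}) and using $d(x,\gamma(t))=t$ yields
\[
\frac{F(\gamma(t))-F(x)}{t} \;\geq\; -\frac{t}{\mu} \quad\text{for all } t\in(0,c].
\]
Letting $t\to 0^+$ gives $D_\gamma F(x)\geq 0$. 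Since $\gamma$ was arbitrary, Proposition \ref{local-global-dir-der} implies $x$ is a global minimum of $F$, i.e.\ $x\in\argmin_{y\in X} F(y)$.

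There is no real obstacle once Jost's uniqueness statement and Proposition \ref{local-global-dir-der} are in hand; both directions reduce to one-line manipulations of the defining infimum. The only mildly delicate point is making sure the directional derivative estimate is legitimate, which requires $x\in dom\, F$, and this is automatic from the finiteness of $F^\mu(x)$ once $J_\mu(x)=x$.
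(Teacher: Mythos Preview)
Your proof is correct and follows essentially the same route as the paper's own proof: both directions use exactly the same manipulations of the defining infimum, with the reverse inclusion established via the directional-derivative criterion of Proposition~\ref{local-global-dir-der}. Your bound $-t/\mu$ is in fact the correct simplification of $-d(x,\gamma(t))^2/(\mu t)$; the paper's displayed $-t^2/\mu$ appears to be a typo, though harmless for the limit.
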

\begin{proof} Let $\mu>0$. \\
$"\supseteq "$ If $\bar{x}$ is a minimizer of $F$, one gets that $\mu F(\bar{x})\leq\mu F(y)+d(\bar{x},y)^2$.
It follows that $\ds \bar{x}\in\argmin_{y\in X}\left\{\mu F(y)+d(\bar{x},y)^2\right\}$. By the definition of
$J_\mu$, it follows that $J_\mu(\bar{x})=\bar{x}$.\\
$"\subseteq "$ Assume that $J_\mu(\bar{x})=\bar{x}$, so $\mu F(\bar{x})\leq\mu F(y)+d(\bar{x},y)^2$ for all
$y\in X$. Let $\gamma:[0,c]\to X$ be a geodesic starting with $\bar{x}$. Then for all $t\in[0,c]$, one has that
$$\frac{F(\gamma(t))-F(\bar{x})}t\geq -\frac{d(\bar{x},\gamma(t))^2}{\mu t}=-\frac{t^2}{\mu}.$$
It follows that $D_\gamma F(\bar{x})\geq 0$, hence we can apply Proposition \ref{local-global-dir-der} to
conclude that $F(\bar{x})\leq F(y)$ for all $y\in X$.
\end{proof}

As the resolvent is a firmly nonexpansive mapping, one can apply Theorem
\ref{Delta-conv-Picard-firmly-ne} and the above result to obtain

\begin{corollary}
Assume that $F$ has a minimizer. Then for all $\mu>0$ and all $x\in X$, the Picard iterate
$(J_\mu^n(x))$ $\Delta$-converges to a minimizer of $F$.
\end{corollary}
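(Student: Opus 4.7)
The plan is to assemble three ingredients already established in the paper: the identification of fixed points of the resolvent with minimizers of $F$ (Proposition \ref{pro:fixedpoint-min}), the fact that the resolvent is firmly nonexpansive (Proposition \ref{resolvent-firmly-ne}), and the $\Delta$-convergence theorem for Picard iterates of firmly nonexpansive mappings on complete $UCW$-hyperbolic spaces (Theorem \ref{Delta-conv-Picard-firmly-ne}). Essentially no new work is required; the task is just to check that the hypotheses of Theorem \ref{Delta-conv-Picard-firmly-ne} are met in the present CAT(0) setting.

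First I would fix $\mu>0$ and $x\in X$. Since $F$ is assumed to have a minimizer, Proposition \ref{pro:fixedpoint-min} gives $Fix(J_\mu)=\argmin_{y\in X} F(y)\neq\emptyset$. Next, I would note that $(X,d)$ is a complete CAT(0) space, which by the discussion following Proposition \ref{char-CAT0} (and the remark that CAT(0) spaces are $UCW$-hyperbolic with modulus $\eta(r,\varepsilon)=\varepsilon^2/8$) is a complete $UCW$-hyperbolic space, and it is uniquely geodesic so convexity makes sense in the required way. The whole space $X$ is trivially a nonempty closed convex subset of itself, so the structural assumptions of Theorem \ref{Delta-conv-Picard-firmly-ne} with $C=X$ are satisfied.

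Then I would invoke Proposition \ref{resolvent-firmly-ne} to conclude that $J_\mu:X\to X$ is firmly nonexpansive, hence in particular $\lambda$-firmly nonexpansive for any (fixed) $\lambda\in(0,1)$. Combined with $Fix(J_\mu)\neq\emptyset$, all the hypotheses of Theorem \ref{Delta-conv-Picard-firmly-ne} hold, and we conclude that the Picard iterate $(J_\mu^n(x))$ $\Delta$-converges to some fixed point $\bar{x}$ of $J_\mu$.

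Finally, applying Proposition \ref{pro:fixedpoint-min} once more, $\bar{x}\in Fix(J_\mu)=\argmin_{y\in X}F(y)$, so $\bar{x}$ is a minimizer of $F$, which is exactly the desired conclusion. There is no real obstacle here; the only thing to watch is that Theorem \ref{Delta-conv-Picard-firmly-ne} is stated for $\lambda$-firmly nonexpansive mappings with a single $\lambda\in(0,1)$, so one should explicitly remark that the firm nonexpansiveness of $J_\mu$ (which asserts the inequality for every $\lambda\in(0,1)$) implies $\lambda$-firm nonexpansiveness for any chosen $\lambda$, whence the theorem applies.
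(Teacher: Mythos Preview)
Your proposal is correct and matches the paper's approach exactly: the paper simply remarks that since the resolvent is firmly nonexpansive (Proposition \ref{resolvent-firmly-ne}), one applies Theorem \ref{Delta-conv-Picard-firmly-ne} together with Proposition \ref{pro:fixedpoint-min}. Your additional care in checking that the CAT(0) space is a complete $UCW$-hyperbolic space and that firm nonexpansiveness yields $\lambda$-firm nonexpansiveness for any fixed $\lambda\in(0,1)$ is fine but not strictly necessary, as these facts are already implicit in the paper's framework.
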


We remark that a more general result was obtained recently by Ba\v{c}\'{a}k \cite{Bac12} using different
methods. Thus, Ba\v{c}\'{a}k obtained in the setting of CAT(0) spaces the following proximal point
algorithm:
if $F$ has minimizers, then  for all $x_0\in X$ and all sequences $(\lambda_n)$ divergent in sum, the sequence
\bua
x_{n+1}:=\argmin_{y\in X}\left(F(y)+\frac{1}{2\lambda_n}d(y,x_n)^2\right)
\eua
$\Delta$-converges to a minimizer of $F$.

\section{Effective rates of asymptotic regularity}\label{cap:rates-ucw-spaces}

As we have proved in Section \ref{cap:Picard-iterates}, any $\lambda$-firmly nonexpansive
mapping $T:C\to C$  defined on a nonempty subset $C$  of a  W-hyperbolic space $X$ is asymptotically regular,
provided $T$ has bounded orbits.

In this section we shall obtain, for $UCW$-hyperbolic spaces, a rate of asymptotic regularity of $T$, that is a
rate of convergence of the sequence $(d(T^nx,T^{n+1}))$ towards $0$. The methods of proof are inspired by
those used by Kohlenbach \cite{Koh03} and the second author \cite{Leu07} for computing rates of
asymptotic regularity for the Krasnoselski-Mann iterations of nonexpansive mappings in uniformly convex
Banach spaces and $UCW$-hyperbolic spaces.

For $x\in C$ and $b,\eps>0$, let us denote
\[Fix_{\varepsilon}(T,x,b):=\{y\in C\mid d(y,x)\leq b \text{~and~} d(y,Ty)<\eps\}.\]
If $Fix_{\varepsilon}(T,x,b)\ne \emptyset$ for all $\eps>0$, we say that $T$
{\em has approximate fixed points} in a $b$-neighborhood of $x$.

\bthm\label{rate-as-reg-ucw}
Let $b>0,\lambda\in(0,1)$ and $\eta:(0,\infty)\times(0,2]\rightarrow(0,1]$ be a mapping that decreases with
$r$ for fixed $\eps$. Then for all UCW-hyperbolic
spaces $(X,d,W,\eta)$, nonempty subsets $C\se X$, $\lambda$-firmly nonexpansive mappings $T:C\to C$ and
all $x\in C$ such that  $T$ has approximate fixed points in a
$b$-neighborhood of $x$, the following holds:
\begin{equation}\label{eq:R2}
\forall \eps>0\,\forall\,n\geq \Phi(\eps,\eta,\lambda,b)\, \big(d(T^nx,T^{n+1}x)\leq\eps\big),
\end{equation}
where
\beq
\Phi(\eps,\eta,\lambda,b):=\begin{cases}\ds
\left[\frac{b+1}{\ds \eps\,\lambda\,(1-\lambda)\,\eta\left(b+1,\frac{\eps}{b+1}\right)}\right] & \text
{for  }\eps<2b, \\
 0 & \text{otherwise}.\label{def-Phi}
\end{cases}
\eeq
\ethm

\begin{remark}\label{tilde-eta}
If, moreover,  $\eta(r,\eps)$ can be written as $\eta(r,\eps)=\eps\cdot\tilde{\eta}(r,\eps)$ such that
$\tilde{\eta}$ increases with $\eps$ (for a fixed $r$), then the bound $\Phi(\eps,\eta,\lambda,b)$ can be
replaced for $\eps<2b$ by
\beq
\tilde{\Phi}(\eps,\eta,\lambda,b)=\left[\frac{b+1}{\ds \eps\,\lambda\,(1-\lambda)\,\tilde{\eta}
\left(b+1,\frac{\eps}{b+1}\right)}\right]
\eeq
\end{remark}

Before proving the above results, let us give two consequences.

\bcor\label{rate-as-reg-bounded-C}
Let $b,\lambda,\eta$ be as in the hypothesis of Theorem \ref{rate-as-reg-ucw}.
Then for all UCW-hyperbolic spaces $(X,d,W,\eta)$, bounded subsets $C\se X$ with diameter $d_C\leq b$,
$\lambda$-firmly nonexpansive mappings $T:C\to C$ and all $x\in C$,
\bua
\forall \eps>0\,\forall\,n\geq \Phi(\eps,\eta,\lambda,b)\, \big(d(T^nx,T^{n+1}x)\leq\eps\big),
\eua
where $\Phi(\eps,\eta,\lambda,b)$ is given by \eqref{def-Phi}.
\ecor
\begin{proof}
If $C$ is bounded, then $T$ is asymptotically regular by Corollary \ref{T-bounded-orbits-as-reg}. Hence, for
all  $b\ge d_C$, $T$ has approximate fixed points in a $b$-neighborhood of $x$ for all $x\in C$.
\end{proof}

Thus, for bounded $C$, we get that $T$ is asymptotically regular  with a rate
$\Phi(\varepsilon,\eta,\lambda,b)$ that only depends on $\varepsilon$, on $X$ via the monotone modulus of
uniform convexity $\eta$, on $C$ via an upper bound $b$ on its diameter $d_C$ and on the mapping
$T$ via $\lambda$. The rate of asymptotic regularity is uniform in the starting point $x\in C$ of the iteration
and other data related with $X,C$ and $T$.

As we have remarked in Section \ref{cap:geodesic}, CAT(0) spaces are $UCW$-hyperbolic spaces with a
quadratic (in $\eps$) modulus of uniform convexity $\displaystyle \eta(\varepsilon)=\frac{\varepsilon^2}{8}$, which has
the form required in Remark \ref{tilde-eta}. As an immediate consequence,  we get a quadratic (in $1/\varepsilon$) rate of asymptotic regularity
in the case of CAT(0) spaces.

\bcor\label{rate-as-reg-CAT0}
Let $b>0$ and $\lambda\in (0,1)$. Then for all CAT(0) spaces $X$, bounded subsets $C\se X$ with diameter
$d_C\leq b$, $\lambda$-firmly nonexpansive mappings $T:C\to C$ and $x\in C$, the following holds
\bua
\forall \varepsilon >0\, \forall n\ge \Psi(\eps,\lambda,b)\,
\big( d(T^nx,T^{n+1}x)\leq\varepsilon\big),
\eua
where
\bua
\Psi(\eps,\lambda,b):=\begin{cases}\ds
\left[\frac{8(b+1)}{\lambda\,(1-\lambda)}\cdot \frac{1}{\eps^2} \right] & \text
{for  }\eps<2b, \\
 0 & \text{otherwise}.
\end{cases}
\eua
\ecor

\subsection{Proof of Theorem \ref{rate-as-reg-ucw} and Remark \ref{tilde-eta}}

One can easily see that $d(T^nx,T^{n+1}x)\leq 2b$ for all $n\in\N$, hence the case $\eps\geq 2b$ follows.

Assume now that $\eps<2b$ and denote
\beq
N:=\Phi(\eps,\eta,\lambda,b)=\left[\frac{b+1}{\ds \eps\,\lambda\,(1-\lambda)\,\eta\left(b+1,\frac{\eps}{b+1}\right)}\right].
\eeq
Let $\delta>0$ be such that $\ds \delta<\frac{1}{4(N+1)}$, so that $\ds (N+1)\delta<\frac{1}{4}<1$.
By hypothesis, there exists $y\in C$ satisfying
\beq
d(x,y)\le b \quad\text{and}\quad d(y,Ty)<\delta.
\eeq
We shall prove that there exists $n\le N$ such that $d(T^nx,T^{n+1}x)\leq\eps$ and apply the fact that $(d(T^nx,T^{n+1}x))$
is nonincreasing to get the conclusion. Assume by contradiction that $d(T^nx,T^{n+1}x)>\eps$ for all $n=0,\ldots,N$.
In the sequel, we fix such an $n$. For simplicity we shall use the notation
\beq
r_n:= d(T^nx,y)+d(y,Ty).
\eeq
One gets by an easy induction that
\bua
r_n &\leq & d(x,y)+(n+1)\,d(y,Ty)\leq b+(N+1)\delta < b+1.
\eua
Since
\bua
d(T^{n+1}x,y)\leq d(T^{n+1}x,Ty)+d(Ty,y)\leq r_n, & d(T^nx,y)\le r_n,
\eua
\bua
d(T^nx,Ty)\le d(T^nx,y)+d(y,Ty)=r_n, & d(T^{n+1}x,Ty)\le r_n
\eua
and $d(T^nx,T^{n+1}x)>\eps$,
we can apply twice Lemma \ref{eta-prop-1}.(\ref{eta-monotone-s-geq-r})  with $r:=r_n$ and $s:=b+1$ to get that
\bua
d((1-\lambda)T^nx\oplus\lambda
T^{n+1}x,y) & \leq & \left(1-2\lambda(1-\lambda)\eta\left(b+1,\frac{\eps}{b+1}\right)\right)r_n,\\
d((1-\lambda)T^nx\oplus\lambda T^{n+1}x,Ty) &\leq & \left(1-2\lambda(1-\lambda)\eta\left(b+1,\frac{\eps}{b+1}\right)\right)r_n.
\eua
As $T$ is $\lambda$-firmly nonexpansive, it follows that
\bua
d(T^{n+1}x,Ty) &\leq & d((1-\lambda)T^nx\oplus\lambda T^{n+1}x,(1-\lambda)y\oplus\lambda Ty)\\
&\leq & (1-\lambda)\,d((1-\lambda)T^nx\oplus\lambda
T^{n+1}x,y)+\\
&& + \lambda\,d((1-\lambda)T^nx\oplus\lambda T^{n+1}x,Ty) \quad \text{by (W1)}\\
&\leq & \left(1-2\lambda(1-\lambda)\eta\left(b+1,\frac{\eps}{b+1}\right)\right)r_n\\
&=& d(T^nx,y)+d(y,Ty)-2r_n\lambda(1-\lambda)\eta\left(b+1,\frac{\eps}{b+1}\right)\\
&\leq & d(T^nx,y)+\delta-\eps\lambda(1-\lambda)\eta\left(b+1,\frac{\eps}{b+1}\right),
\eua
since $d(y,Ty)\le\delta$ and
\bua
\frac{\eps}2 &< & \frac12 d(T^nx,T^{n+1}x)\le \frac12\left(d(T^nx,y)+d(y,Ty)+d(Ty,T^{n+1}x)\right)\le r_n.
\eua
Using now the fact that $d(T^{n+1}x,y)\le d(T^{n+1}x,Ty)+d(y,Ty)$, we get that
\beq
d(T^{n+1}x,y) \le d(T^nx,y)+2\delta-\eps\lambda(1-\lambda)\eta\left(b+1,\frac{\eps}{b+1}\right). \label{proof-eff-rate-ineq-1}
\eeq
Adding (\ref{proof-eff-rate-ineq-1}) for $n=0,\ldots, N$, it follows that
\bua
d(T^{N+1}x,y) &\le & d(x,y) + 2(N+1)\delta-(N+1)\eps\lambda(1-\lambda)\eta\left(b+1,\frac{\eps}{b+1}\right)\\
&\leq & b+\frac{1}{2}-(N+1)\eps\lambda(1-\lambda)\eta\left(b+1,\frac{\eps}{b+1}\right)\\
&\leq & b+\frac{1}{2}-(b+1)<0,
\eua
that is a contradiction.$\qed$\\[0.2cm]

To prove Remark \ref{tilde-eta}, observe that, by denoting
\bua
N:=\tilde{\Phi}(\eps,\eta,\lambda,b)=\left[\frac{b+1}{\ds \eps\,\lambda\,(1-\lambda)\,\tilde{\eta}\left(b+1,\frac{\eps}{b+1}\right)}\right],
\eua
and following the proof above with $r_n$ instead of $b+1$ we obtain
\bua
d(T^{n+1}x,Ty) &\leq &  d(T^nx,y)+\delta-r_n\lambda(1-\lambda)\eta\left(r_n,\frac{\eps}{r_n}\right)\\
&\leq &  d(T^nx,y)+\delta-r_n\lambda(1-\lambda)\eta\left(b+1,\frac{\eps}{r_n}\right)\\
&& \text{since } \eta \text{ is monotone}\\
&=& d(T^nx,y)+\delta-\eps\lambda(1-\lambda)\tilde{\eta}\left(b+1,\frac{\eps}{r_n}\right)\\
&\leq & d(T^nx,y)+\delta-\eps\lambda(1-\lambda)\tilde{\eta}\left(b+1,\frac{\eps}{b+1}\right)\\
&& \text{since } \tilde{\eta} \text{ increases with }\eps.
\eua
Follow now the proof above to get the conclusion. $\hfill\Box$

\mbox{ }

\section*{Acknowledgements:}

David  Ariza-Ruiz was supported by Junta de Andalucia, Grant FQM-3543. Part of his research was carried out
while visiting the Simion Stoilow Institute of Mathematics of the Romanian Academy.\\[1mm]
Lauren\c tiu Leu\c stean was supported by a grant of the Romanian National Authority for Scientific
Research, CNCS - UEFISCDI, project number PN-II-ID-PCE-2011-3-0383.\\[1mm]
Genaro L\'{o}pez-Acedo was partially supported by DGES, Grant MTM2009-13997-C02-01 and Junta de Andalucia,
Grant FQM-127.\\[1mm]
The authors gratefully acknowledge the anonymous reviewer for helpful comments and suggestions.

\end{document}